\def\titlerunning#1{\gdef\titrun{#1}}
\def\author#1{\gdef\autrun{\def\and{\unskip, }#1}\gdef\@author{#1}}
\def\address#1{{\def\and{\\\hspace*{18pt}}\renewcommand{\thefootnote}{}%
\footnote {#1}}%
\markboth{\autrun}{\titrun}}
\def\email#1{e-mail: #1}
\def\subjclass#1{{\renewcommand{\thefootnote}{}%
\footnote{\emph{Mathematics Subject Classification (2020):} #1}}}
\def\keywords#1{\par\medskip
\noindent\textbf{Keywords.} #1}
\newtheorem{theorem}{Theorem}[section]
\newtheorem{lemma}[theorem]{Lemma}
\newtheorem{definition}[theorem]{Definition}
\newtheorem{proposition}[theorem]{Proposition}
\newtheorem{remark}[theorem]{Remark}
\newtheorem{example}[theorem]{Example}
\newtheorem*{thmA}{\textbf{Theorem A}}
\newtheorem*{thmB}{\textbf{Theorem B}}
\newtheorem*{corC}{\textbf{Corollary C}}
\newcommand{\R}{\mathbb{R}}
\newcommand{\Proof}{\begin{proof}}
\newcommand{\End}{\end{proof}}
\numberwithin{equation}{section}
\newcommand{\PreserveBackslash}[1]{\let\temp=\\#1\let\\=\temp}
\newcolumntype{C}[1]{>{\PreserveBackslash\centering}p{#1}}
\newcolumntype{R}[1]{>{\PreserveBackslash\raggedleft}p{#1}}
\newcolumntype{L}[1]{>{\PreserveBackslash\raggedright}p{#1}}
\newcolumntype{I}{!{\vrule width 1pt}}
\newlength\savedwidth
\begin{document}


\baselineskip=15pt


\titlerunning{Weak KAM solutions of Hamilton-Jacobi equations}

\title{Viscosity subsoltions of Hamilton-Jacobi equations and Invariant sets of contact Hamilton systems}

\author{Xiang Shu\dag \and Jun Yan\ddag\and Kai Zhao*}

\date{\today}

\maketitle

\address{Xiang Shu: School of Mathematical Sciences, Fudan University, Shanghai 200433, China; \email{xshu19@fudan.edu.cn}
\and Jun Yan: School of Mathematical Sciences, Fudan University, Shanghai 200433, China;
\email{yanjun@fudan.edu.cn}
\and
Kai Zhao:  School of Mathematical Sciences, Fudan University, Shanghai 200433, China;
\email{zhao$\_$kai@fudan.edu.en}}
\subjclass{37J51; 35F21; 35D40}

\begin{abstract}
The objective of this paper is to present some results about viscosity subsolutions of the contact Hamiltonian-Jacobi equations on connected, closed manifold $M$ 
$$
H(x,\partial_x u,u)= 0, \quad x\in M.
$$
Based on implicit variational principles introduced in \cite{WWY2,WWY3}, we focus on the monotonicity of the solution semigroups on viscosity subsolutions and the positive invariance of the epigraph for viscosity subsolutions. Besides, we show a similar consequence for strict viscosity subsolutions on $M$.
\keywords{Hamilton-Jacobi equations, viscosity subsolutions, weak KAM theory}
\end{abstract}

%


\section{Introduction}
\setcounter{equation}{0}
\setcounter{footnote}{0}

Suppose $M$ is a closed (i.e.,compact, without boundary) connected and smooth manifold. Let $H(x,p,u)$: $T^*M \times \R\rightarrow \R$ be a $C^3$ function  satisfying:
\begin{itemize}
	\item[\textbf{(H1)}] Positive Definiteness:  $\frac{\partial^2 H}{\partial p^2}  (x,p,u) $ is positive definite on $T^*M\times \R$;
	\item[\textbf{(H2)}] Superlinearity: For every $(x,u)\in M\times \R $, $\lim_{|p|\to +\infty}\frac{H(x,p,u)}{|p|}=+\infty$;
	\item[\textbf{(H3)}] Lipschitz Continuity: There exists a constant $\lambda >0 $ such that $\Big| \frac{\partial H}{\partial u}(x,p,u)\Big|\leqslant \lambda $ 
	for any  $ (x,p,u)\in T^*M \times \R $; 
\end{itemize}

We consider the viscosity solutions of the  following first-order partial differential equation
	\begin{equation}\label{eq:intro_HJs}\tag{HJ$_s$}
	H(x,\partial _x u,u)=0, \quad x\in M. 
	\end{equation}

Let $J^1(M,\R)$ denote the manifold of 1-jet of functions on $M$. The standard contact form on $J^1(M,\R)$ is the 1-form $\alpha=d u-p d x $. Every $C^2$ function $H(x,u,p)$ determinates a unique vector field $X_H$ defined by the conditions
$$
\mathcal{L}_{X_H}\alpha=- \frac{\partial H}{\partial u} \alpha,\quad \alpha(X_H)=-H.
$$
where $\mathcal{L}_{X_H}$ denotes the Lie derivative along the contact vector field $X_H$. In Darboux coordinates, the contact vector field $X_H$ generated by $H$ is formulated by:
\begin{equation} \label{eq:ode}
X_H:\left\{
\begin{aligned}
\dot x&=\frac{\partial H }{\partial p}(x,p,u),\\
\dot p &=-\frac{\partial H }{\partial x}(x,p,u)-\frac{\partial H }{\partial u}(x,p,u) \cdot p, \quad (x,p,u)\in T^*M \times \R, \\ 
\dot u&=\frac{\partial H }{\partial p}(x,p,u) \cdot p-H(x,p,u).
\end{aligned}
\right.
\end{equation}
$H$ is called a contact Hamiltonian, and $X_H$ is called a contact Hamiltonian vector field. Due to the fundamental theorems for ordinary differential equations , for each $(x,p,u)\in T^*M\times \R$, there exists a unique integral curve of \eqref{eq:ode} through the point, which is denoted by $\Phi_H^t(x,p,u)$. In this paper, we need a extra condition on $\Phi_H^t$.
\begin{itemize}
	\item[\textbf{(SH1)}] Completeness: Every maximal integral curve of the contact Hamiltonian flow $\Phi_H^t$ has all of $\R$ as its domain of definition.
\end{itemize}
\begin{remark}
	In  Appendix A, we can give a common sufficient condition \eqref{eq:sufficient} to ensure \rm{(SH1)} holds.
\end{remark}
Actually, it is common to find non-trivial examples satisfying all of our assumptions. For instance, 
\begin{example}
	Consider the $C^3$-smooth Hamiltonian $H:T^*M \times \R \to \R$ defined by 
	$$H(x,p,u)=\frac{1}{2}\langle A(x)p,p \rangle +V(x,u),$$
	where $A(x)$ is  positive definite and $V(x,u):M\times \R \to \R$   satisfies $|\frac{\partial V}{\partial u}(x,u)|\leqslant \lambda $ for any $(x,u)\in M\times \R$.
\end{example}

In this paper, we assume that $H(x,p,u)$ satisfies (H1)-(H3) and (SH1). Let $L(x,\dot{x},u)$ be the Legendre transform of $H(x,p,u)$,i.e.,
\begin{equation*}
	L(x,\dot{x},u)=\sup_{p \in T_x^*M}\{p \cdot \dot{x}-H(x,p,u)\}.
\end{equation*}

Let us recall two  semigroups of operators introduced in \cite{WWY1}.  Define a family of nonlinear operators $\{T^-_t\}_{t\geqslant0}$ from $C(M,\mathbb{R})$ to itself as follows. For each $\varphi\in C(M,\mathbb{R})$, we denote the unique continuous function on $ (x,t)\in M\times[0,+\infty)$ by $(x,t)\mapsto T^-_t\varphi(x)$ satisfying that
\begin{equation}\label{eq:T^-_t}
	T^-_t\varphi(x)=\inf_{\gamma}\left\{\varphi(\gamma(0))+\int_0^tL(\gamma(\tau),\dot{\gamma}(\tau),T^-_\tau\varphi(\gamma(\tau)))d\tau\right\},
\end{equation}
where the infimum is taken among the absolutely continuous curves $\gamma:[0,t]\to M$ with $\gamma(t)=x$.  It was also proved  in \cite{WWY1} that $\{T^-_t\}_{t\geqslant0}$ is a semigroup of operators 
and the function $(x,t)\mapsto T^-_t\varphi(x)$ is a viscosity solution of the evolutionary first-order partial differential equation \begin{equation}\label{eq:HJe}\tag{HJ$_e$}
\begin{cases}
\partial_t u+H(x,\partial_x u,u)=0,\quad (x,t)\in M\times(0,\infty),\\
u(x,0)=\varphi(x), \quad x\in M,
\end{cases}
\end{equation}
We call $\{T^-_t\}_{t\geqslant0}$ the \textit{backward solution semigroup}. 

Similarly, one can define another semigroup of operators $\{T^+_t\}_{t\geqslant0}$, called the \textit{forward solution  semigroup}, by
\begin{equation*}\label{fixufor}
T^+_t\varphi(x)=\sup_{\gamma}\left\{\varphi(\gamma(t))-\int_0^tL(\gamma(\tau),\dot{\gamma}(\tau),T^+_{t-\tau}\varphi(\gamma(\tau)))d\tau\right\},
\end{equation*}
where the supremum is taken among the absolutely continuous curves $\gamma:[0,t]\to M$ with $\gamma(0)=x$.

\medskip
It is well known that \eqref{eq:intro_HJs}  admits a viscosity solution through Perron's method following Ishii\cite{ishii1987}, and the fundamental idea of the proof is to find a special subset of viscosity subsolutions. Actually, viscosity subsolutions decide most of the properties of viscosity solutions, and we wish to clarify the properties of viscosity subsolutions. Recently, there are some dynamical results on the Aubry-Mather theory and the weak KAM theory for contact Hamiltonian systems \cite{WWY1,WWY2,WWY3,WWY4,Ni:2021aa},where variational principles \cite{WWY1,CCWY2019,CCJWY2020} played essential roles.

In this paper, our aim is to state some necessary and sufficient conditions for the viscosity subsolutions. To be specific, we can show that the viscosity subsolution is completely decided by a positive invariant set under the contact Hamiltonian flow $X_H$, or equivalently by the monotonicity along the solution semigroups $T^-_t,T^+_t $ , namely 

\begin{thmA}\label{theorem A}
Suppose that $\varphi(x) \in C(M,\R)$, the following statements are equivalent:
\begin{enumerate}
	\item[\rm(1)] $\varphi (x)$ is a  viscosity subsoltion of equation \eqref{eq:intro_HJs}, i.e.  $H(x,p,\varphi(x))\leqslant 0 $ for any $x\in M, p\in D^+ \varphi(x)$.
	\item[\rm(2)] $T^-_t \varphi(x) \geqslant \varphi(x) $ , for any $t\geqslant 0, x\in M$; 
	\item[\rm(3)] $T^+_t \varphi(x)\leqslant \varphi(x)$ for any $t\geqslant 0, x\in M$;
	\item[\rm(4)]  $\Phi^t_H(\Gamma_\varphi  ) \subset \Gamma_\varphi $ for any $t\geqslant 0$, where $\Gamma_\varphi:=\{(x,p,u),u\geqslant \varphi(x) \} $. 
	\end{enumerate}
\end{thmA}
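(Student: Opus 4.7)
The plan is to establish the cycle (1)$\Longrightarrow$(2)$\Longrightarrow$(4)$\Longrightarrow$(2)$\Longrightarrow$(1), giving (1)$\Longleftrightarrow$(2)$\Longleftrightarrow$(4), and then to obtain (1)$\Longleftrightarrow$(3) by the entirely symmetric argument on the forward semigroup $T^+_t$, with suprema/infima and $D^+\varphi/D^-\varphi$ interchanged throughout; all the genuinely new ingredients live on the backward side, so I concentrate there.

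For (1)$\Longrightarrow$(2) I view $\varphi$ as a time-independent function $\tilde\varphi$ on $M\times[0,\infty)$: the superdifferential identity $D^+\tilde\varphi(x_0,t_0)=D^+\varphi(x_0)\times\{0\}$ shows that hypothesis (1) turns $\tilde\varphi$ into a viscosity subsolution of \eqref{eq:HJe}, and since $T^-_t\varphi$ is the unique viscosity solution of \eqref{eq:HJe} with initial datum $\varphi$, the comparison principle for contact Hamilton--Jacobi equations (valid under (H1)--(H3), cf.\ \cite{WWY1}) yields $\varphi\leq T^-_t\varphi$. For (2)$\Longrightarrow$(1) I fix $x_0\in M$ and $p\in D^+\varphi(x_0)$, set $v:=\partial_p H(x_0,p,\varphi(x_0))$, and plug the test curve $\gamma(\tau):=x_0-(t-\tau)v$ into the variational definition of $T^-_t\varphi(x_0)$. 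Combining the resulting inequality with (2), with the superdifferential bound $\varphi(x_0-tv)\leq\varphi(x_0)-tp\cdot v+o(t)$, and with $T^-_\tau\varphi\to\varphi$ as $\tau\downarrow 0$, dividing by $t$ and sending $t\downarrow 0$ yields $p\cdot v\leq L(x_0,v,\varphi(x_0))$; at this choice of $v$ the Legendre equality reads $p\cdot v-L=H$, forcing $H(x_0,p,\varphi(x_0))\leq 0$.

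For (2)$\Longrightarrow$(4) I take $(x_0,p_0,u_0)\in\Gamma_\varphi$ and its orbit $(x(t),p(t),u(t))=\Phi_H^t(x_0,p_0,u_0)$. The third line of \eqref{eq:ode} together with the Legendre identity along characteristics gives $u(t)=u_0+\int_0^t L(x,\dot x,u)\,d\tau$, while testing $\gamma=x$ in the variational definition of $T^-_t$ yields $T^-_t\varphi(x(t))\leq\varphi(x_0)+\int_0^t L(x,\dot x,T^-_\tau\varphi(x))\,d\tau$. A Gronwall-type comparison for these implicit integral (in)equalities, available because (H3) makes $L$ Lipschitz in $u$, combined with $u_0\geq\varphi(x_0)$, forces $T^-_t\varphi(x(t))\leq u(t)$; hypothesis (2) then supplies $\varphi(x(t))\leq u(t)$. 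For (4)$\Longrightarrow$(2) I fix $x\in M$ and $t>0$ and select a minimizer $\gamma^*$ for $T^-_t\varphi(x)$. By the implicit variational principle of \cite{WWY2,WWY3}, the lift $(\gamma^*(\tau),p^*(\tau),T^-_\tau\varphi(\gamma^*(\tau)))$ is an integral curve of $\Phi_H^t$ starting at $(\gamma^*(0),p^*(0),\varphi(\gamma^*(0)))\in\partial\Gamma_\varphi\subset\Gamma_\varphi$, and invariance (4) forces the endpoint into $\Gamma_\varphi$, i.e., $T^-_t\varphi(x)\geq\varphi(x)$.

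The principal obstacle I anticipate is the Gronwall-type comparison for the implicit integral relations in (2)$\Longrightarrow$(4): the quantities $u(\tau)$ and $T^-_\tau\varphi(x(\tau))$ are each defined implicitly and coupled through the $u$-argument of $L$, but (H3) reduces the estimate on their difference to a standard linear Gronwall argument. The other essential input is the minimizer-lifting property from the implicit variational principle of \cite{WWY2,WWY3}, which is precisely what converts dynamical invariance (4) into a monotonicity statement about $T^-_t$.
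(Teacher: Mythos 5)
Your proposal is essentially correct but takes a partly different route from the paper. For (1)$\Rightarrow$(2) the paper does not invoke a comparison principle: it first shows that a viscosity subsolution satisfies $\varphi(\gamma(b))-\varphi(\gamma(a))\leqslant\int_a^b L(\gamma,\dot\gamma,\varphi(\gamma))\,dt$ along curves (by approximating with curves on which $\varphi$ is differentiable a.e.) and then runs a Gronwall argument on $G(s)=\varphi(\gamma(s))-T^-_s\varphi(\gamma(s))$ along a minimizer. Your route through the evolutionary comparison principle is legitimate (comparison for \eqref{eq:HJe} does hold under the mere Lipschitz bound (H3) after an exponential rescaling in time, and the time-independent extension of $\varphi$ is indeed a subsolution of \eqref{eq:HJe} since $D^+\tilde\varphi=D^+\varphi\times\{0\}$), but it imports a result the paper never uses and should be cited explicitly. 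Your (2)$\Rightarrow$(1) is cleaner than the paper's: you test directly with $p\in D^+\varphi(x_0)$, whereas the paper argues at a.e.\ differentiability points and then invokes the equivalence of almost-everywhere and viscosity subsolutions. For (2)$\Leftrightarrow$(4) the paper isolates Lemma \ref{lem Phi}, $\Phi^t_H(\Gamma_\varphi)\subset\Gamma_{T^-_t\varphi}$, which follows in one line from the minimality characterization \eqref{eq:Minimality} of $h_{x_0,u_0}$; your Gronwall comparison between $u(t)=u_0+\int_0^tL(x,\dot x,u)\,d\tau$ and $w(t)=T^-_t\varphi(x(t))$ reproves that special case by hand. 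It does close (use the semigroup property to get $w(t)\leqslant w(s)+\int_s^tL(x,\dot x,w)\,d\tau$ and a first-crossing argument to dispose of the absolute value), but it buys nothing over citing \eqref{eq:Minimality}. Your (4)$\Rightarrow$(2) matches the paper's.

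One caveat deserves emphasis: the claim that (1)$\Leftrightarrow$(3) follows by interchanging $D^+\varphi$ and $D^-\varphi$ is not right as stated, since (3) is still equivalent to the \emph{subsolution} property on $D^+\varphi$. In the symmetric variational argument for (3)$\Rightarrow$(1) the test curve emanates from $x$, so the superdifferential inequality points the wrong way; one naturally obtains $H(x,p,\varphi(x))\leqslant 0$ only for $p\in D^-\varphi(x)$ and must then pass through the a.e.-subsolution characterization (using that $\varphi$ is Lipschitz, cf.\ Remark \ref{rmk:sub Lip}) to recover (1). The paper sidesteps this entirely by deducing (2)$\Leftrightarrow$(3) from the duality $h_{x_0,u_0}(x,t)=u\Leftrightarrow h^{x,u}(x_0,t)=u_0$ of Proposition \ref{Equivalence} together with monotonicity in $u_0$, which is the cleaner path and the one I would recommend adopting.
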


In \cite{Fathibook}, a function $\varphi(x)\in C(M,\R)$ is called  a viscosity subsolution of $\eqref{eq:intro_HJs}$ is strict at $x_0\in M$  if there exists an open neighborhood $V_{x_0}$ of $x_0$, and $c_{x_0}<0$ such that $u|{V}_{x_0}$ is a viscosity subsolution of $H(x,d_xu)=c_{x_0} $ on $V_{x_0}$. We define that $\varphi(x)$ is a \textbf{strict viscosity subsolution} of $\eqref{eq:intro_HJs}$ on $M$ if  $\varphi(x)$ is  a viscosity subsolution of $\eqref{eq:intro_HJs}$ which is strict at each  $x\in M$. In Theorem B,  we give the necessary and sufficient conditions for the  strict viscosity subsolution .
\begin{thmB}\label{theorem B}
Suppose that $\varphi(x) \in C(M,\R)$, the following statements are  equivalent: 
\begin{enumerate}
	\item[\rm(1)] 
	$\varphi (x)$ is a strict viscosity subsoltion of equation \eqref{eq:intro_HJs} on $M$.
	\item[\rm(2)]  $T_t^-\varphi(x) >\varphi(x) $ for any $t\geqslant 0, x\in M$ and  there exists $c>0$ such that
	 \begin{equation}\label{eq:T_t- +c}
	    \quad \lim_{t\to 0^+} \frac{1}{t}  \Big( T_t^- \varphi (x)- \varphi(x)\Big) \geqslant c, \quad \forall x\in M.
	 \end{equation}
	\item[\rm(3)] $T_t^+\varphi(x) < \varphi(x) $ for any $t\geqslant 0, x\in M$ and  there exists $c>0$ such that
	 \begin{equation}\label{eq:T_t+ +c}
	  \lim_{t\to 0^+} \frac{1}{t}  \Big( T_t^+ \varphi (x)- \varphi(x)\Big) \leqslant -c, \quad \forall x\in M.
	 \end{equation}
\end{enumerate}
\end{thmB}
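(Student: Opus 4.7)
The plan is to prove the cycle $(1)\Rightarrow(2)\Rightarrow(1)$ together with $(1)\Leftrightarrow(3)$, which is handled by the symmetric argument using the forward semigroup $T_t^+$ in place of $T_t^-$. Since Theorem A already provides the nonstrict correspondences, the task is to upgrade the inequalities to strict ones with a uniform linear-in-$t$ lower bound. Before starting the main implications, I would perform a compactness reduction: from a finite subcover of $M$ by the neighborhoods $V_{x_0}$ supplied by the strictness hypothesis, the local constants $c_{x_0}<0$ collapse into a single $c_0<0$ with $H(x,p,\varphi(x))\leqslant c_0$ for every $x\in M$ and every $p\in D^+\varphi(x)$. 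This is the uniform form of strictness that I will work with throughout.

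For $(1)\Rightarrow(2)$ I would introduce the space-time barrier $\psi(x,t):=\varphi(x)+\alpha t$ and verify that it is a viscosity subsolution of \eqref{eq:HJe} on $M\times[0,T]$. If $\zeta$ is a $C^1$ test function with $\psi-\zeta$ attaining a local maximum at $(x_0,t_0)$, then $\partial_t\zeta(x_0,t_0)=\alpha$ and $d_x\zeta(x_0,t_0)\in D^+\varphi(x_0)$, so by the Lipschitz hypothesis (H3) together with the uniform strictness one obtains $\partial_t\zeta+H(x_0,d_x\zeta,\psi(x_0,t_0))\leqslant\alpha+H(x_0,d_x\zeta,\varphi(x_0))+\lambda\alpha t_0\leqslant\alpha(1+\lambda T)+c_0$, which is nonpositive as soon as $\alpha\leqslant -c_0/(1+\lambda T)$. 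The standard comparison principle for \eqref{eq:HJe} on compact $M$ under (H1)--(H3) then yields $T_t^-\varphi(x)\geqslant\varphi(x)+\alpha t$ on $M\times[0,T]$, which simultaneously delivers the strict inequality on $(0,T]$ and the uniform bound \eqref{eq:T_t- +c} with $c=\alpha$. To extend strictness to all $t>0$ I would combine the semigroup law, the monotonicity $\psi_1\leqslant\psi_2\Rightarrow T_t^-\psi_1\leqslant T_t^-\psi_2$, and the Gronwall-type estimate $T_t^-(\varphi+\eta)\geqslant T_t^-\varphi+\eta e^{-\lambda t}$ coming from (H3), writing $T_t^-\varphi=T_{t-T/2}^-(T_{T/2}^-\varphi)\geqslant T_{t-T/2}^-(\varphi+\alpha T/2)\geqslant T_{t-T/2}^-\varphi+(\alpha T/2)e^{-\lambda(t-T/2)}\geqslant\varphi+(\alpha T/2)e^{-\lambda t}$, where the final step invokes Theorem A.

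For $(2)\Rightarrow(1)$ I would fix $x_0\in M$, a $C^1$ test function $\phi$ with $\varphi-\phi$ attaining a local maximum equal to $0$ at $x_0$, and any $\alpha\in(0,c)$, and examine $\Psi(x,t):=T_t^-\varphi(x)-\phi(x)-\alpha t$ on a compact neighborhood of $(x_0,0)$. Condition (2) forces $\Psi(x_0,t)\geqslant(c-\alpha)t+o(t)>0$ for small $t>0$, whereas $\Psi(\cdot,0)\leqslant 0$ near $x_0$, so after the usual spatial quadratic penalization the supremum of $\Psi$ is attained at an interior point $(x^*,t^*)$ with $t^*>0$ and $(x^*,t^*)\to(x_0,0^+)$ as the penalty scale shrinks. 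Because $T_t^-\varphi$ is a viscosity subsolution of \eqref{eq:HJe} and $\phi(x)+\alpha t$ touches it from above at $(x^*,t^*)$, I get $\alpha+H(x^*,d\phi(x^*),T_{t^*}^-\varphi(x^*))\leqslant 0$; continuity of $H$, of $\phi$, and of $(x,t)\mapsto T_t^-\varphi(x)$ lets me pass to the limit and conclude $H(x_0,d\phi(x_0),\varphi(x_0))\leqslant-\alpha$, and since $\alpha\in(0,c)$ is arbitrary, $H(x_0,p,\varphi(x_0))\leqslant -c$ for every $p\in D^+\varphi(x_0)$. This is the viscosity subsolution inequality for $H(x,p,u)=-c$ on all of $M$, so $\varphi$ is strict at every point with the uniform constant $-c$. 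The equivalence $(1)\Leftrightarrow(3)$ is obtained by the mirror argument with the forward barrier $\varphi(x)-\alpha t$ and the evolutionary equation governing $T_t^+$, paralleling the treatment of $T_t^+$ in Theorem A.

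The main technical obstacle I anticipate is the interior-maximum step in $(2)\Rightarrow(1)$: making sure that the optimizer $(x^*,t^*)$ of $\Psi$ neither escapes to the spatial boundary of the chosen neighborhood nor to the terminal time, and that it actually converges to $(x_0,0^+)$ in a controlled way so that the time-dependent viscosity inequality collapses into a sharp pointwise stationary bound at $x_0$. This is the standard doubling-of-variables/penalization routine, but it has to be orchestrated with the $u$-dependent Hamiltonian and with the fact that $T_t^-\varphi$ is merely continuous, not smooth, in $(x,t)$.
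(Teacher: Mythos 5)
Your argument is essentially correct but follows a genuinely different, more PDE-theoretic route than the paper's. For $(1)\Rightarrow(2)$ the paper never touches the evolutionary comparison principle: it works directly with the variational representation \eqref{eq:T^-_t}, takes a minimizer $\gamma$ of $T_t^-\varphi$, and runs a Gronwall argument on $G(s)=\varphi(\gamma(s))+c\,\frac{1-e^{-\lambda s}}{\lambda}-T_s^-\varphi(\gamma(s))$ to get the explicit global bound $T_t^-\varphi\geqslant\varphi+c\,\frac{1-e^{-\lambda t}}{\lambda}$ for all $t\geqslant 0$ in one stroke; your barrier $\varphi+\alpha t$ plus comparison plus the semigroup patching achieves the same conclusion but needs two extra external ingredients, namely a comparison theorem for \eqref{eq:HJe} with a $u$-dependent, non-proper Hamiltonian (true under (H3) via the usual exponential-weight trick, but nowhere established in the paper's framework) and the estimate $T_t^-(\varphi+\eta)\geqslant T_t^-\varphi+\eta e^{-\lambda t}$. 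For $(2)\Rightarrow(1)$ the paper again avoids viscosity-theoretic machinery: it feeds the limit bound \eqref{eq:T_t- +c} into the implicit variational formula \eqref{eq:Implicit variational}, obtains $c+\langle\partial_x\varphi(x),\dot\xi(0)\rangle\leqslant L(x,\dot\xi(0),\varphi(x))$ at points of differentiability, chooses $\dot\xi(0)=\partial_pH$, and concludes via the equivalence of a.e.\ subsolutions and viscosity subsolutions (this is why Remark \ref{rmk:sub Lip} matters); your penalization of $T_t^-\varphi(x)-\phi(x)-\alpha t$ is a valid alternative, and the terminal-time/boundary issues you flag are indeed the points requiring care, all handled by standard means.

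The one place where your plan is materially weaker is the third statement. You propose to prove $(1)\Leftrightarrow(3)$ by a ``mirror argument with the forward barrier and the evolutionary equation governing $T_t^+$,'' but the paper never identifies such an equation, and making this precise requires setting up the conjugate Hamiltonian $\check H(x,p,u)=H(x,-p,-u)$ and checking that $T_t^+$ is its backward semigroup after a sign flip — a nontrivial amount of bookkeeping you have not supplied. The paper instead proves $(2)\Leftrightarrow(3)$ purely algebraically from Proposition \ref{Equivalence} ($h_{x_0,u_0}(x,t)=u\Leftrightarrow h^{x,u}(x_0,t)=u_0$) together with the monotonicity of the implicit action functions in $u_0$, exactly as in Theorem A; you should either adopt that route or spell out the conjugation. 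With that replacement, and with the comparison principle for \eqref{eq:HJe} properly justified, your proof goes through.
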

\begin{corC}
	$T_t^-\varphi(x) >\varphi(x) $ for any $t\geqslant 0, x\in M$ if and only if $\Phi^t_H(\Gamma_\varphi  ) \subset \mathring \Gamma_\varphi $ for any $t> 0$.
\end{corC}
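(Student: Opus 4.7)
The plan is to read both conditions as strictness upgrades of the corresponding items in Theorem A and then to bridge them using the implicit variational principle from \cite{WWY1,WWY2,WWY3}. Since $T_t^-\varphi>\varphi$ implies $T_t^-\varphi\geq\varphi$, and $\Phi_H^t(\Gamma_\varphi)\subset\mathring\Gamma_\varphi$ implies $\Phi_H^t(\Gamma_\varphi)\subset\Gamma_\varphi$, Theorem A already shows under either hypothesis that $\varphi$ is a viscosity subsolution and that $\Phi_H^t(\Gamma_\varphi)\subset\Gamma_\varphi$ for every $t\geq 0$; only the strictness has to be produced, i.e.\ we must exclude that a trajectory or a minimizer reaches the boundary $\partial\Gamma_\varphi=\{(x,p,u):u=\varphi(x)\}$ at some positive time. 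The key tool will be that for any absolutely continuous $\gamma:[0,t]\to M$ with $\gamma(t)=x$, the scalar ODE $\dot w=L(\gamma(\tau),\dot\gamma(\tau),w)$ with $w(0)=\varphi(\gamma(0))$ has a unique solution $w_\gamma$ satisfying $T_t^-\varphi(x)\leq w_\gamma(t)$, with equality attained for a minimizer whose lift $(\gamma(\tau),p(\tau),w_\gamma(\tau))$ is an integral curve of $X_H$.

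For the implication $T_t^-\varphi>\varphi\Rightarrow\Phi_H^t(\Gamma_\varphi)\subset\mathring\Gamma_\varphi$, I would fix $(x_0,p_0,u_0)\in\Gamma_\varphi$ and set $(x(\tau),p(\tau),u(\tau)):=\Phi_H^\tau(x_0,p_0,u_0)$. From \eqref{eq:ode} and the Legendre duality (attained at the flow's momentum),
$$
\dot u\;=\;\tfrac{\partial H}{\partial p}\cdot p-H(x,p,u)\;=\;L(x,\dot x,u).
$$
Introduce $w$ solving the same ODE $\dot w=L(x(\tau),\dot x(\tau),w)$ with $w(0)=\varphi(x_0)\leq u_0$. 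By (H3) and the envelope theorem the map $u\mapsto L(x,\dot x,u)$ is Lipschitz, so standard ODE comparison yields $u(\tau)\geq w(\tau)$ on $[0,t]$. Combining with the implicit variational principle applied to the test curve $x(\cdot)$ and the strict hypothesis,
$$
u(t)\;\geq\;w(t)\;\geq\;T_t^-\varphi(x(t))\;>\;\varphi(x(t)),
$$
whence $\Phi_H^t(x_0,p_0,u_0)\in\mathring\Gamma_\varphi$, as required.

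For the converse, suppose towards a contradiction that $T_{t_0}^-\varphi(x_1)=\varphi(x_1)$ for some $t_0>0$ and $x_1\in M$. Picking a minimizer $\gamma^*$ for $T_{t_0}^-\varphi(x_1)$ and lifting it to an integral curve $(\gamma^*(\tau),p(\tau),u(\tau))$ of $X_H$, the endpoint conditions read $u(0)=\varphi(\gamma^*(0))$ and $u(t_0)=T_{t_0}^-\varphi(x_1)=\varphi(x_1)$. The initial datum sits in $\partial\Gamma_\varphi\subset\Gamma_\varphi$, but $\Phi_H^{t_0}$ carries it to a point whose $u$-coordinate equals $\varphi(x_1)$ exactly, hence to $\partial\Gamma_\varphi$ rather than to the open epigraph, contradicting $\Phi_H^{t_0}(\Gamma_\varphi)\subset\mathring\Gamma_\varphi$. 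The only potentially delicate point is the precise form of the implicit variational principle used above — both the upper bound $T_t^-\varphi(x)\leq w_\gamma(t)$ for arbitrary test curves and the existence of a minimizer whose lift is a characteristic of $X_H$ — but these are exactly the facts available from \cite{WWY1,WWY2,WWY3}, after which both directions reduce to a short ODE-comparison argument and a boundary-tracking argument respectively.
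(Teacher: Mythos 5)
Your proof is correct and follows essentially the same route as the paper: the forward implication is Lemma \ref{lem Phi} combined with the hypothesis $T_t^-\varphi>\varphi$ (you re-derive the lemma's content $u(t)\geqslant T_t^-\varphi(x(t))$ by an ODE comparison along characteristics instead of quoting the minimality and monotonicity of $h_{x_0,u_0}$, but the substance is identical), and the converse uses exactly the paper's observation that $T_t^-\varphi(x)$ is realized as the $u$-coordinate of a contact orbit emanating from the graph of $\varphi$, which by hypothesis must land in $\mathring\Gamma_\varphi$.
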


The rest of this paper is organized as follows. We give some preliminaries and prove Theorem A in Section 2. The proof of Theorem B and Corollary C are given in Section 3. 

\section{Proof of theorem A}
In this section, we first recall the definition of the viscosity solution of equation\eqref{eq:intro_HJs} and implicit action functions and some properties of them. Then we give the proof of theorem A.

\medskip
\subsection{Preliminaries} 
\begin{definition}[Viscosity solutions of equation \eqref{eq:intro_HJs}] 
Let $U$ be an open subset $U \subset M $.
	\begin{itemize}
		\item [(1)]A function $u:U\to \R $ is called a viscosity subsolution of equation \eqref{eq:intro_HJs}, if for every $C^1$ function $\phi:U\rightarrow \R$ and every point $x_0 \in U$ such that $u- \phi $ has a local maximum at $x_0$, we have 
		$
		H(x_0,d_x \phi(x_0),u(x_0) )\leqslant c;
		$
		\item[(2)]A function $u:U\to \R $ is called a viscosity supersolution of equation \eqref{eq:intro_HJs}, if for every $C^1$ function $\psi :U\rightarrow \R$ and every point $y_0 \in U$ such that $u- \psi $ has a local minimum at $y_0$, we have 
		$
		H(y_0,d_x \psi(y_0),u(y_0) )\geqslant c;
		$
		\item[(3)]A function $u:U\to \R $ is called a viscosity solution of equation \eqref{eq:intro_HJs}, if it is both a viscosity subsolution and a viscosity supersolution.
	\end{itemize}	
\end{definition}
We recall that for any $x\in M$ and continuous function $u$,  the closed convex sets
\begin{align*}
	D^- u(x)= \Big\{ p\in T^*_x M: \lim \inf_{y\to x} \frac{u(y)-u(x)-\langle p,y-x \rangle }{|y-x|}\geqslant 0 \Big\},\\
	D^+ u(x)= \Big\{ p\in T^*_x M: \lim \sup_{y\to x} \frac{u(y)-u(x)-\langle p,y-x \rangle }{|y-x|}\leqslant 0 \Big\}.
\end{align*}   
are called the  \textit{ subdifferential and superdifferential} of $u$ at $x$ , respectively, see \cite{BC,Ba,CS,F2} for more details on this notion and its relationship with viscosity solutions. For instance, it is proved in \cite[Prop 3.1.7]{CS} that  
$p\in D^+ u(x)$ if and only if $p= D \phi(x) $ for some $C^1$ function $\phi$ such that $u-\phi$ attains a local maximum at $x$. Thus, we can present the other notion of the  viscosity subsolution(supersolution) as 
$$
H(x,p,u) \leqslant 0(\ \geqslant 0 \ ), \quad \forall p\in D^+u(x) \ ( \ D^-u(x) \ ).
$$

 Let us recall two implicit action functions introduced in \cite{WWY2}\cite{WWY3} which can give the representation formulae for $T_t^-$ and $T_t^+$ by \cite[Proposition 4.1]{WWY3}:
\begin{equation}\label{semigroup and action}
T^-_t\varphi(x)=\inf_{y\in M}h_{y,\varphi(y)}(x,t),\quad T^+_t\varphi(x)=\sup_{y\in M}h^{y,\varphi(y)}(x,t),\quad (x,t)\in M\times(0,+\infty),
\end{equation}
where the continuous functions 
\begin{align*}
	h_{x_0,u_0}(x,t):M\times\R\times M\times(0,+\infty)\to\R, \qquad h^{x_0,u_0}(x,t):M\times\R\times M\times(0,+\infty)\to\R  \\ (x_0,u_0,x,t)\mapsto h_{x_0,u_0}(x,t) \qquad \qquad \qquad \qquad (x_0,u_0,x,t)\mapsto h^{x_0,u_0}(x,t)
\end{align*} were introduced in Proposition \ref{Minimality} and \ref{Maximality}, called forward and backward implicit action functions respectively. There are various properties of implicit action functions in  \cite{WWY2,WWY1,WWY3}.




\begin{proposition}{\cite{WWY2}}\label{Minimality} 
Properties of backward implicit action function $h_{x_0,u_0}(x,t)$:
\begin{enumerate}
	\item[(1)] \text{(Minimality)} Given $x_0,x\in M $  and $u_0\in \R $ and $t>0$,let $S^{x,t}_{x_0,u_0}$ be the set of the solutions $(x(s),p(s),u(s) ) $ of \eqref{eq:ode} on $[0,t]$ with $x(0)=x_0,x(t)=x,u(0)=u_0$.Then 
\begin{equation}
\label{eq:Minimality}
 h_{x_0,u_0}(x,t):=\inf \{u(t):(x(s),p(s),u(s))\in S^{x,t}_{x_0,u_0}  \} ,
\end{equation} 
for any $ (x,t)\in M \times (0,+\infty )$.
	\item[(2)]\text{(Monotonicity)} Given $x_0 \in M, u_1< u_2 \in \R $, we have $h_{x_0,u_1}(x,t)< h_{x_0,u_2}(x,t)$
	for any $t>0, x\in M$.
	\item[(3)]\text{(Lipschitz continuity)}The function $(x_0,u_0,x,t)\mapsto h_{x_0,u_0}(x,t)$ is locally Lipschitz continuous on $M\times \R\times M\times (0,+\infty ) $.
	\item[(4)] \text{(Implicit variational)} For any given $x_0\in M $ and $u_0\in \R $, 
	\begin{equation} \label{eq:Implicit variational}
	h_{x_0,u_0}(x,t)=u_0+\inf_{\substack{\gamma(t)=x\\ \gamma(0)=x_0 } }\int_0^t L(\gamma(\tau), \dot \gamma(\tau),h_{x_0,u_0}(\gamma(\tau) ,\tau )  )\ d\tau,
	\end{equation}
where the infimum is taken among the Lipschitz continuous curves $\gamma:[0,t]\rightarrow M $ and can be achieved. 
\end{enumerate}
\end{proposition}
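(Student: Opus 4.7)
My plan is to take the Minimality formula \eqref{eq:Minimality} as the working definition of $h_{x_0,u_0}(x,t)$ and derive the other three properties from it.

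First I would recast the infimum in \eqref{eq:Minimality} as a variational problem over base curves alone. For each absolutely continuous curve $\gamma:[0,t]\to M$ with $\gamma(0)=x_0$ and $\gamma(t)=x$, condition (H3) makes the scalar ODE
\begin{equation*}
\dot w(s)=L(\gamma(s),\dot\gamma(s),w(s)),\quad w(0)=u_0,
\end{equation*}
uniquely solvable, with solution $w_\gamma$. I would then show that $\inf_\gamma w_\gamma(t)$ agrees with the right-hand side of \eqref{eq:Minimality}: one direction is Legendre duality, since the Euler--Lagrange equations for the variational problem $\inf_\gamma w_\gamma(t)$, with $p:=\partial_{\dot x}L$, are exactly the contact system \eqref{eq:ode}; the other direction uses that along any solution of \eqref{eq:ode}, $\dot u=\partial_pH\cdot p-H=L(x,\dot x,u)$, so the $u$-component is precisely $w_\gamma$. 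Attainment of both infima then follows from Tonelli's compactness program, where (H2) gives uniform velocity bounds, (H3) closes a Gronwall estimate for $w_\gamma$, and (SH1) rules out blow-up.

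For the Implicit variational principle \eqref{eq:Implicit variational}, I would apply Bellman's dynamic-programming argument to a minimizer $\eta$: it shows that every initial restriction $\eta|_{[0,\tau]}$ is itself minimizing, so $w_\eta(\tau)=h_{x_0,u_0}(\eta(\tau),\tau)$ along $\eta$, and integrating the defining ODE yields equality in \eqref{eq:Implicit variational} with the infimum attained by $\eta$. For the reverse inequality on a competing curve $\gamma$, I would discretize $[0,t]$ into short subintervals, apply the Bellman step on each, and pass to the limit using Gronwall stability from (H3) to absorb the discrepancy between $w_\gamma(\tau)$ and $h_{x_0,u_0}(\gamma(\tau),\tau)$. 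Monotonicity is then a direct comparison for $\dot w=L(\gamma,\dot\gamma,w)$ under two initial data: (H3) yields the Gronwall gap $w_{u_2}(t)-w_{u_1}(t)\geq e^{-\lambda t}(u_2-u_1)$ uniformly in $\gamma$, which survives passing to the infimum.

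For Lipschitz continuity, I would build explicit competitor curves: to estimate $|h_{x_0,u_0}(x,t)-h_{x_0',u_0'}(x',t')|$, concatenate a short interpolating segment at the endpoints with a time-rescaled near-optimal curve, using (H2) for uniform velocity bounds on minimizers and $|\partial_u L|\leq\lambda$ (dual to (H3)) to control the perturbation in the initial datum. The main obstacle I expect is the reverse inequality in \eqref{eq:Implicit variational}: because of the self-reference in the Lagrangian, $w_\gamma$ and $h_{x_0,u_0}(\gamma,\cdot)$ are generally unequal, and the naive monotonicity argument succeeds only when $\partial_u H$ has a fixed sign; for the general hypothesis (H3) one is forced into the discretize-and-iterate scheme, which is the technical heart of the implicit variational principle.
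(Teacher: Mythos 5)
This proposition is not proved in the paper at all: it is quoted verbatim from \cite{WWY2} (see also \cite{WWY1,WWY3}), so there is no in-paper argument to compare against; the comparison has to be with the construction in the cited sources. There, $h_{x_0,u_0}(x,t)$ is \emph{defined} as the unique continuous function satisfying the implicit relation \eqref{eq:Implicit variational}, whose existence and uniqueness is the main theorem (proved by an iteration/fixed-point scheme exploiting the Lipschitz bound (H3)), and the Minimality characterization \eqref{eq:Minimality} is then derived by showing that minimizers of the implicit functional are extremals of \eqref{eq:ode}. You invert this: you take \eqref{eq:Minimality} as the definition and pass through the Herglotz formulation, replacing the flow-based infimum by the infimum of $w_\gamma(t)$ where $\dot w_\gamma=L(\gamma,\dot\gamma,w_\gamma)$, $w_\gamma(0)=u_0$. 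This is essentially the route of the Herglotz variational principle developed in \cite{CCWY2019,CCJWY2020}, and it is a legitimate, genuinely different decomposition: it buys you Monotonicity almost for free (scalar ODE solutions cannot cross, and Gronwall gives the uniform gap $e^{-\lambda t}(u_2-u_1)$ surviving the infimum) and makes the concatenation/Markov inequality available, which the purely flow-based set $S^{x,t}_{x_0,u_0}$ does not give directly since a concatenation of contact trajectories is not a contact trajectory. What the original approach buys instead is that well-posedness is settled once and for all at the level of the fixed-point argument, whereas in your scheme the analytic weight shifts to two points you only gesture at: lower semicontinuity and attainment for the Herglotz functional (the $w$-dependence of the integrand makes this strictly harder than classical Tonelli theory) together with the $C^1$ regularity of its minimizers, which is what you need both to show $S^{x,t}_{x_0,u_0}\neq\emptyset$ and to close the inequality $\inf\{u(t)\}\leqslant\inf_\gamma w_\gamma(t)$. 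Your diagnosis of the reverse inequality in \eqref{eq:Implicit variational} as the technical heart is accurate, and the discretize-and-Gronwall scheme you propose is exactly how it is handled in the literature; I would only insist that you state the dynamic-programming (triangle) inequality for $h$ explicitly as a lemma, since every subsequent step leans on it.
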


 \begin{proposition}{\cite{WWY3}}\label{Maximality}
Properties of forward implicit action function $h^{x_0,u_0}(x,t)$:
\begin{enumerate}
	\item[\rm{(1)}] \text{(Maximality)} Given $x_0,x\in M $  and $u_0\in \R $ and $t>0$,let $S_{x,t}^{x_0,u_0}$ be the set of the solutions $(x(s),p(s),u(s) ) $ of \eqref{eq:ode} on $[0,t]$ with $x(0)=x,x(t)=x_0,u(t)=u_0$. Then 
\begin{equation}
\label{eq:Maximality}
 h^{x_0,u_0}(x,t):=\sup \{u(0):(x(s),p(s),u(s))\in S_{x,t}^{x_0,u_0}  \},
\end{equation} 
for any $ (x,t)\in M \times (0,+\infty )$.
   \item[\rm{(2)}]\text{(Monotonicity)} Given $x_0 \in M, u_1< u_2 \in \R $, we have
	$
	h^{x_0,u_1}(x,t)< h^{x_0,u_2}(x,t)
	$
	for any $t>0, x\in M$.
	\item[\rm{(3)}]\text{(Lipschitz continuity)} The function $(x_0,u_0,x,t)\mapsto h^{x_0,u_0}(x,t)$ is locally Lipschitz continuous on $M\times \R\times M\times (0,+\infty ) $.
	\item[\rm{(4)}]\text{(Implicit variational)}For any given $x_0\in M $ and $u_0\in \R $, 
	$$
h^{x_0,u_0}(x,t)=u_0-\inf_{\substack{\gamma(0)=x\\ \gamma(t)=x_0 } }\int_0^t L(\gamma(\tau), \dot \gamma(\tau),h^{x_0,u_0}(\gamma(\tau) ,t-\tau )  )\ d\tau,
$$ 
where the infimum is taken among the Lipschitz continuous curves $\gamma:[0,t]\rightarrow M $ and can be achieved.
\end{enumerate}
\end{proposition}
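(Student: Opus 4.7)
The plan is to mirror the proof strategy of Proposition \ref{Minimality} for the backward action established in \cite{WWY2}, exploiting the Legendre duality $L(x,\dot{x},u)+H(x,p,u)=p\cdot\dot{x}$. The starting observation is that along any integral curve $(x(\cdot),p(\cdot),u(\cdot))$ of $X_H$, the third equation of \eqref{eq:ode} combined with this duality yields $\dot{u}(\tau)=L(x(\tau),\dot{x}(\tau),u(\tau))$. Therefore, for any Lipschitz curve $\gamma:[0,t]\to M$ with $\gamma(0)=x$ and $\gamma(t)=x_0$, I associate the unique solution $v_\gamma:[0,t]\to\R$ of the terminal-value Cauchy problem $\dot{v}_\gamma(\tau)=L(\gamma(\tau),\dot\gamma(\tau),v_\gamma(\tau))$, $v_\gamma(t)=u_0$; this is well-posed because (H3) combined with Legendre duality guarantees a uniform Lipschitz constant of $L$ in the $u$-variable. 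I then \emph{define} $h^{x_0,u_0}(x,t):=\sup_\gamma v_\gamma(0)$ and verify that this definition satisfies all four properties.

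For the maximality property (1), any orbit in $S_{x,t}^{x_0,u_0}$ projects to some admissible $\gamma$, and its $u$-component coincides with $v_\gamma$ by the ODE above, so the supremum over contact-flow orbits is bounded by $h^{x_0,u_0}(x,t)$. Conversely, a Tonelli-type argument applied to the functional $\gamma\mapsto v_\gamma(0)$ (which is finite and attained on a compact family thanks to (H2) and the a priori bounds coming from (H3)) produces a maximizer $\gamma^*$ that is $C^2$ and satisfies the Euler--Lagrange equations; Legendre-transforming gives an integral curve of $X_H$ realizing the supremum, so the two characterizations agree. Property (4) then follows by a dynamic programming argument: restrictions of the maximizing orbit $\gamma^*$ remain maximizers of the corresponding subproblems, so $v_{\gamma^*}(\tau)=h^{x_0,u_0}(\gamma^*(\tau),t-\tau)$; integrating $\dot{v}_{\gamma^*}=L$ from $0$ to $t$ reproduces the implicit variational formula, with the infimum achieved by $\gamma^*$.

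For monotonicity (2), fix any admissible $\gamma$ and compare the two solutions $v_{\gamma,u_1},v_{\gamma,u_2}$ of the backward ODE with distinct terminal data $u_1<u_2$; the bound $|\partial_u L|\leqslant\lambda$ inherited from (H3) and Gronwall's inequality yield $v_{\gamma,u_2}(\tau)-v_{\gamma,u_1}(\tau)\geqslant(u_2-u_1)e^{-\lambda t}>0$, and taking the supremum over $\gamma$ preserves the strict inequality. For Lipschitz continuity (3), the superlinearity hypothesis (H2) gives uniform $L^\infty$ bounds on $\|\dot\gamma\|$ along any near-maximizing family over compact parameter sets, so the claim reduces to Lipschitz estimates for $L$ in all arguments plus continuous dependence of the ODE for $v_\gamma$ on initial data and parameters, all of which follow from (H1)--(H3).

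The main obstacle is the self-referential nature of formula (4): the Lagrangian is evaluated along the very function $h^{x_0,u_0}$ being defined. My strategy circumvents this by first fixing a path $\gamma$, which turns the $u$-ODE into an ordinary non-self-referential Cauchy problem for $v_\gamma$, and only then taking the supremum; the implicit formula is recovered \emph{a posteriori} via the Bellman identity on an optimal orbit. A secondary subtlety is ensuring that the supremum in the definition of $h^{x_0,u_0}$ is actually realized by a Lipschitz curve (so that the orbits in (1) exist and the infimum in (4) is attained); this requires Tonelli compactness for the family of competing paths together with upper semicontinuity of $\gamma\mapsto v_\gamma(0)$, both of which need some care because of the $u$-coupling in $L$ but are handled by the Lipschitz dependence of $v_\gamma$ on $\gamma$ in $W^{1,1}$.
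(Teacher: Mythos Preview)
The paper does not prove this proposition at all: it is quoted verbatim from \cite{WWY3} (note the citation immediately after \texttt{\textbackslash begin\{proposition\}}), just as Proposition~\ref{Minimality} is quoted from \cite{WWY2}. Both are listed in the preliminaries as known facts and are used as black boxes in the proofs of Theorems~A and~B. There is therefore no ``paper's own proof'' to compare against.

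That said, your sketch is a sensible reconstruction of the argument one expects to find in \cite{WWY3}. The device of freezing a curve $\gamma$, solving the terminal-value ODE $\dot v_\gamma=L(\gamma,\dot\gamma,v_\gamma)$, $v_\gamma(t)=u_0$, and then optimizing over $\gamma$ is exactly how one unwinds the self-reference in the implicit variational formula, and your Gronwall argument for strict monotonicity with the uniform gap $(u_2-u_1)e^{-\lambda t}$ is correct. Two points deserve a bit more care if you want this to be a full proof rather than a plan. First, in the dynamic-programming step for (4) you need the inequality direction for \emph{arbitrary} curves, not just the optimal one: for a generic $\gamma$ the function $\tau\mapsto h^{x_0,u_0}(\gamma(\tau),t-\tau)$ is not the solution $v_\gamma$ of your ODE, so you must compare the two via a separate Gronwall/monotonicity argument rather than ``integrating $\dot v_{\gamma^*}=L$''. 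Second, the Tonelli existence step is not entirely standard because the action depends on $v_\gamma$, which itself depends on the whole curve; you correctly flag this, but the upper semicontinuity of $\gamma\mapsto v_\gamma(0)$ in $W^{1,1}$ needs the a priori $C^1$ bounds on near-maximizers to be established \emph{before} you can invoke compactness, and that circularity has to be broken (typically by first bounding $\|v_\gamma\|_\infty$ uniformly via (H3), then bounding $\|\dot\gamma\|_\infty$ via (H2)).
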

The relation between $h_{x_0,u_0}(x,t)$ and $h^{x_0,u_0}(x,t)$ was shown as follows:
\begin{proposition}{\rm{ \cite[Prop 3.5]{WWY3} }\text{(Equivalence)}}\label{Equivalence}
$h_{x_0,u_0}(x,t)=u \Leftrightarrow h^{x,u}(x_0,t)=u_0.$
\end{proposition}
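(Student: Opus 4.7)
The plan is to prove the equivalence via a symmetric argument exploiting two features of the implicit action functions: their variational characterizations as optima over contact Hamilton trajectories (Proposition~\ref{Minimality}(1) and Proposition~\ref{Maximality}(1)) and the strict Monotonicity in the $u_0$-argument (item (2) of each proposition). The key observation is that a single trajectory of $X_H$ simultaneously enters the infimum defining $h_{\cdot,\cdot}$ (via its endpoint) and the supremum defining $h^{\cdot,\cdot}$ (via its starting point), so identifying $u$ as the infimum at $(x,t)$ automatically bounds $h^{x,u}(x_0,t)$ from below, and monotonicity takes care of the opposite inequality.

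For the forward implication $h_{x_0,u_0}(x,t)=u \Rightarrow h^{x,u}(x_0,t)=u_0$, I would first use Minimality, combined with the attainment in the implicit variational formula Proposition~\ref{Minimality}(4) and the Legendre duality passing to \eqref{eq:ode}, to produce an optimal trajectory $(x^*(s),p^*(s),u^*(s))$ with $x^*(0)=x_0$, $u^*(0)=u_0$, $x^*(t)=x$ and $u^*(t)=u$. This trajectory is admissible in the Maximality formula \eqref{eq:Maximality} for $h^{x,u}(x_0,t)$, whence $h^{x,u}(x_0,t)\geq u_0$. To rule out strict inequality, suppose $h^{x,u}(x_0,t)>u_0$; by definition of the supremum one can select a trajectory from $x_0$ to $x$ with $u$-endpoint equal to $u$ and starting $u$-value $\tilde u_0>u_0$. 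This trajectory is admissible in \eqref{eq:Minimality}, giving $h_{x_0,\tilde u_0}(x,t)\leq u$, which contradicts the strict Monotonicity $h_{x_0,\tilde u_0}(x,t)>h_{x_0,u_0}(x,t)=u$ from Proposition~\ref{Minimality}(2).

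The converse $h^{x,u}(x_0,t)=u_0 \Rightarrow h_{x_0,u_0}(x,t)=u$ is handled by a perfectly symmetric argument, interchanging the roles of Minimality/Maximality and invoking the strict Monotonicity of Proposition~\ref{Maximality}(2): Maximality supplies a certifying trajectory yielding $h_{x_0,u_0}(x,t)\leq u$, and the assumption of strict inequality there produces a trajectory ending at some $\tilde u_1<u$ that violates $h^{x,\tilde u_1}(x_0,t)<h^{x,u}(x_0,t)=u_0$.

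The main delicate point in both directions is confirming that the extremizers in \eqref{eq:Minimality} and \eqref{eq:Maximality} are realized by $X_H$-trajectories rather than merely by Lipschitz curves. This passes through the attainment statements in the implicit variational formulas Proposition~\ref{Minimality}(4) and Proposition~\ref{Maximality}(4), together with the Euler--Lagrange-to-Hamilton correspondence under (H1)--(H2) and the completeness hypothesis (SH1). Once attainment is in hand, each implication is a two-line contradiction argument as above.
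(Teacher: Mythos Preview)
The paper does not supply its own proof of this proposition: it is quoted verbatim from \cite[Prop.~3.5]{WWY3} and used as a black box in the proof of Theorem~A ($(2)\Rightarrow(3)$). So there is nothing in the present paper to compare against.

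That said, your argument is essentially the standard one and is correct. One small clarification: you worry at the end about whether the extremizers in \eqref{eq:Minimality} and \eqref{eq:Maximality} are realized by $X_H$-trajectories ``rather than merely by Lipschitz curves,'' but note that the sets $S^{x,t}_{x_0,u_0}$ and $S_{x,t}^{x_0,u_0}$ in those formulas already consist only of solutions of \eqref{eq:ode}; the Lipschitz curves appear in the separate variational characterizations (item (4) of each proposition). What you actually need is that the infimum in \eqref{eq:Minimality} (and the supremum in \eqref{eq:Maximality}) is \emph{attained}, and your route via item~(4) plus Euler--Lagrange regularity is the right way to get it. Also, in the contradiction step you do not need the supremum in \eqref{eq:Maximality} to be attained: if $h^{x,u}(x_0,t)>u_0$, any admissible trajectory with initial value $\tilde u_0\in(u_0,h^{x,u}(x_0,t))$ already suffices.
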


\subsection{Proof of Theorem A}
Before turning to the proof of the main Theorem A, we will need one more preliminary.
\begin{lemma}\label{lem Phi}
	Given $\varphi \in C(M,\mathbb{R})$, we have
	$$\Phi^t_H(\Gamma_\varphi) \subset \Gamma_{T_t^-\varphi}, \quad t\geqslant 0. $$ 
\end{lemma}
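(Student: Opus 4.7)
The plan is to apply the Minimality property of the backward implicit action function (Proposition \ref{Minimality}(1)) to the given trajectory, then combine it with Monotonicity and the representation formula \eqref{semigroup and action} for $T^-_t$.

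First I fix $(x_0,p_0,u_0)\in\Gamma_\varphi$, i.e.\ $u_0\geqslant\varphi(x_0)$, and let $(x(s),p(s),u(s)):=\Phi^s_H(x_0,p_0,u_0)$ for $s\in[0,t]$. This curve is by construction a solution of \eqref{eq:ode} on $[0,t]$ with initial data $x(0)=x_0$, $u(0)=u_0$ and terminal point $x(t)$, so it belongs to $S^{x(t),t}_{x_0,u_0}$. The Minimality property then gives
$$
h_{x_0,u_0}(x(t),t)\;\leqslant\;u(t).
$$

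Next I compare with $\varphi$. Since $u_0\geqslant\varphi(x_0)$, the Monotonicity property (combined, if needed, with the Lipschitz continuity in $u_0$ to pass from the strict inequality stated in Proposition \ref{Minimality}(2) to the non-strict one) yields
$$
h_{x_0,\varphi(x_0)}(x(t),t)\;\leqslant\;h_{x_0,u_0}(x(t),t)\;\leqslant\;u(t).
$$
On the other hand, by the representation formula \eqref{semigroup and action},
$$
T^-_t\varphi(x(t))\;=\;\inf_{y\in M}h_{y,\varphi(y)}(x(t),t)\;\leqslant\;h_{x_0,\varphi(x_0)}(x(t),t).
$$
Chaining these two estimates gives $T^-_t\varphi(x(t))\leqslant u(t)$, i.e.\ $\Phi^t_H(x_0,p_0,u_0)\in\Gamma_{T^-_t\varphi}$, which is exactly the claim. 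The case $t=0$ is trivial since $T^-_0=\mathrm{id}$.

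I do not expect a real obstacle: all three ingredients (Minimality, Monotonicity, and the infimum representation of $T^-_t$) are quoted verbatim from the earlier propositions, and the argument is a one-line chain of inequalities. The only mild subtlety is that Proposition \ref{Minimality}(2) is formulated with strict inequalities on $u_0$, so I will spend a sentence passing to the non-strict version via Lipschitz continuity (Proposition \ref{Minimality}(3)), or equivalently by applying the strict inequality at $\varphi(x_0)-\varepsilon$ and letting $\varepsilon\to 0^+$.
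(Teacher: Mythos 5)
Your proof is correct and follows essentially the same argument as the paper: both chain the representation formula $T^-_t\varphi = \inf_y h_{y,\varphi(y)}$, the monotonicity of $h_{x_0,\cdot}$ in $u_0$, and the minimality characterization of $h_{x_0,u_0}$ applied to the flow trajectory itself. Your extra remark on passing from strict to non-strict monotonicity is a minor refinement the paper glosses over, but the substance is identical.
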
	

\begin{proof}
	For any $(x_1,p_1,u_1) \in \Phi^t_H(\Gamma_\varphi)$, we assume that:  $$(x_1,p_1,u_1)=\Phi^t_H(x_0,p_0,u_0), \quad u_0\geqslant \varphi(x_0)$$
	
	
	Thus, by \eqref{semigroup and action} and Proposition \ref{Minimality}, it have 
	\begin{align*}
	T_t^- \varphi(x_1)=&\,\inf_{y\in M} h_{y,\varphi(y)}(x_1,t)
	\leqslant h_{x_0,\varphi(x_0)}(x_1,t)
	\leqslant h_{x_0,u_0}(x_1,t)\\ =&\, \inf \{u(t):(x(s),u(s),p(s))\in S^{x_1,t}_{x_0,u_0}\}
	\leqslant u_1.
	\end{align*}
	 It follows that $\Phi^t_H(\Gamma_\varphi) \subset \Gamma_{T_t^-\varphi} $ for any $t\geqslant 0$.
	
%
\end{proof}

%

We get back to the proof of Theorem A and assume that $\varphi(x)$ is Lipschitz continuous at first.

\noindent{\it Proof of Theorem A:} 
$(1) \Rightarrow (2)$: 
	We claim that: 
	If  $\varphi(x)$ is a viscosity subsoltion of \eqref{eq:intro_HJs},then for any piecewise $C^1$ curve $\gamma:[a,b]\rightarrow M$, 
	$$
	\varphi(\gamma(b))-\varphi(\gamma(a))\leqslant \int_a^b L(\gamma(t),\dot \gamma(t),\varphi(\gamma(t))) dt,   
	$$
	
	 If $\varphi(x)$ is differentiable at $x_0$, then $H(x_0,\partial_x\varphi(x_0),\varphi(x_0))\leqslant 0$. Following the method mentioned in \cite[Prop 4.2.3]{Fathibook}, we can always choose a sequence of piecewise $C^1$ curves $\gamma_n:[a,b]\to M$, such that $\varphi$ is differentiable on $\gamma_n(t)$ for almost every $t\in [a,b]$, $\gamma_n(a)=\gamma(a)$,$\gamma_n(b)=\gamma(b)$ and $\gamma_n$  converges in the $C^1$ topology to $\gamma $, then we obtain 
	 \begin{align*}
	 	&\, \varphi(\gamma(b))-\varphi(\gamma(a))
	 	=  \liminf_{n\to \infty} \int_a^b \frac{d \varphi(\gamma_n(t) ) }{dt} dt = \liminf_{n\to \infty} \int_a^b \langle \partial_x \varphi(\gamma_n(t)), \dot \gamma_n(t) \rangle  dt \\
	 	\leqslant &\, \liminf_{n\to \infty} \int_a^b L(\gamma_n(t),\dot \gamma_n(t),\varphi(\gamma_n(t)) )+H(\gamma_n(t),\dot \gamma_n(t),\varphi(\gamma_n(t)) )dt\\
	 	\leqslant &\, \liminf_{n\to \infty} \int_a^b L(\gamma_n(t),\dot \gamma_n(t),\varphi(\gamma_n(t)) )dt \\
	 	=&\, \int_a^b L(\gamma(t),\dot \gamma(t),\varphi(\gamma(t)) )dt,
	 \end{align*}
	 which completes our claim. 
	 
	 Then we want to show $T_t^- \varphi \geqslant \varphi $ for any $t\geqslant 0 $. By contradiction, we assume that there exists  $ x_0 \in M$ and $t>0$ such that $T^-_t \varphi(x_0) <\varphi(x_0) $.
	 
	 Let $\gamma:[0,t]\to M$ be a minimizer of  \eqref{eq:T^-_t} with $\gamma(t)=x_0 $, i.e.
	 $$
	 T_t^- \varphi(x_0)= \varphi(\gamma(0))+\int_0^t L(\gamma(s),\dot\gamma(s),T_s^-\varphi (\gamma(s)))\ ds.
	 $$
	 
	 Let $G(s)=\varphi(\gamma(s))-T_s^- \varphi(\gamma(s))$, since $G(0)=0$ and $G(t)>0$, then there exists $0\leqslant t_0<t$ such that $G(t_0)=0$ and $G(s)>0 , \forall s\in (t_0,t] $ .
	 \begin{align*}
	 	&\,G(s)= \varphi(\gamma(s))-T_s^-\varphi(\gamma(s))= \varphi(\gamma(s))-\varphi(\gamma(t_0))-\int_{t_0}^s L(\gamma(\tau),\dot\gamma(\tau),T_\tau^-\varphi (\gamma(\tau)))\ d\tau \\
	 	\leqslant &\, \int_{t_0}^s L(\gamma(\tau),\dot\gamma(\tau),\varphi (\gamma(\tau)))-L(\gamma(\tau),\dot\gamma(\tau),T_\tau^-\varphi (\gamma(\tau)))\ d \tau     
	 	\leqslant  \lambda \int_{t_0}^s G(\tau) \ d \tau,
	 \end{align*}
	 where $\lambda$ is the Lipschitz constant of $L$ with respect to $u$. From Gronwall inequality ,it follows that $G(s)\leqslant 0 $ for any $s \in [t_0,t]$. It derives a contradiction, which follows that $T_t^-\varphi \geqslant \varphi $ for any $t\geqslant 0$. \\

\medskip
$(2) \Rightarrow (1)$: 
 Since $T_t^-\varphi \geqslant \varphi $ for any $t\geqslant 0$, by 
 \eqref{semigroup and action}, we have
  $$
  T_t^- \varphi (y)=\inf_{x\in M} h_{x,\varphi(x)}(y,t) \geqslant \varphi(y) .
 $$
 
 For any fixed $x\in M$ and any $ C^1 $ curve $\xi:[0,t]\to M$ with $\xi(0)=x$ and $\xi(t)=y$, due to \eqref{eq:Implicit variational}, we have
 \begin{equation}\label{eq:A2-1.1}
\begin{split}
	\varphi(y)\leqslant &\, h_{x, \varphi(x)}(y,t) 
	=\varphi(x) +\inf_{\substack{\gamma(t)=y \\ \gamma(0)=x } }\int_0^t L(\gamma(\tau), \dot \gamma(\tau) ,h_{x, \varphi(x)}(\gamma(\tau) ,\tau ) )\ d\tau, \\
	\leqslant &\, \varphi(x) +\int_0^t L(\xi(\tau), \dot \xi(\tau),h_{x, \varphi(x)}(\xi (\tau) ,\tau  )  )\ d\tau.
\end{split}
 \end{equation}
For any differentiable point $x\in M$ of $\varphi $,
\begin{align*}
	\lim_{t \rightarrow 0^+}\frac{1}{t}[\varphi (\xi(t))-\varphi (\xi(0)) ]\leqslant &\, \lim_{t\rightarrow 0^+}\frac{1}{t}\int_0^t L(\xi(s),\dot\xi(s),h_{x,\varphi (x)}(\xi(s),s)) \ ds, 
\end{align*}
which leads to
$$
\langle \partial_x\varphi(x),\dot\xi(0)\rangle \leqslant L(\xi(0),\dot\xi(0),h_{x,\varphi (x)}(\xi(0),0)) =L(x,\dot \xi(0),\varphi(x)).
$$

By taking $\dot\xi(0)=\partial_p H(x,\varphi (x),\partial_x \varphi(x))$, we get
\[
L(x,\dot \xi(0),\varphi(x))+H(x,\partial_x \varphi(x),\varphi(x))=\langle \partial_x\varphi(x),\dot\xi(0)\rangle,
\]
which implies $H(x, \partial_x \varphi (x) ,\varphi (x))\leqslant 0.$

Since $\varphi$ is Lipschitz on $M$, $\varphi$ is differentiable almost everywhere, then $\varphi(x):M\rightarrow\R$ is an almost everywhere subsolution. As a result, it has to be a viscosity subsolution of \eqref{eq:intro_HJs},  where the equivalence between almost everywhere subsolutions and viscosity subsolutions was proved in bunch of references \cite{BC,Ba,Si}.\\

\medskip	
$(2)\Rightarrow (4) $: 
From Lemma \ref{lem Phi}, we have $ \Phi^t_H(\Gamma_\varphi) \subset \Gamma_{T_t^-\varphi}$ for any $t\geqslant 0$. Due to (2), it get $\Gamma_{T_t^-\varphi} \subset \Gamma_\varphi $ for any $t\geqslant 0$.  
	Hence, $\Phi^t_H(\Gamma_\varphi ) \subset \Gamma_\varphi $ for any $t\geqslant 0$. \\

\medskip	
$(4)\Rightarrow (2)  $:	 
 For any given $x\in M$ and $t\geqslant 0$, by Lemma \ref{lem Phi}, there exists $p\in T_x M$ such that
 $$ (x,p,u)\in \Phi^t_H(\Gamma_\varphi), \text{   and }
  u=T^-_t \varphi(x). $$
Since $(x,p,u)\in \Phi^t_H(\Gamma_\varphi)\subset \Gamma_\varphi$, it follows that $ T^-_t \varphi(x)=u\geqslant \varphi(x)$ for any $t\geqslant 0 , x\in M$.\\

\medskip	
$(2)\Rightarrow	 (3) $: Given $t>0$,due to  \eqref{semigroup and action}, it gets that 
    $$
    T^-_t \varphi(y)=\inf_{x \in M} h_{x,\varphi(x)}(y,t)\geqslant \varphi(y) \quad \forall y \in M,
    $$
    which implies that
    $ h_{x,\varphi(x)}(y,t)\geqslant \varphi(y)$ for any $ x,y \in M$.

    Let $u=h_{x,\varphi(x)}(y,t)$, and by Proposition \ref{Equivalence}, if follows that $  h^{y,u}(x,t)=\varphi(x)$. 
    From (2) of Proposition \ref{Maximality}, we have
   $
     h^{y,\varphi(y)}(x,t)\leqslant\varphi(x)
    $ for any $x,y \in M$. Thus, by  \eqref{semigroup and action}, it follows that $T^+_t \varphi(x)=\sup_{y \in M} h^{y,\varphi(y)}(x,t)  \leqslant \varphi(x)$ for any $x \in M $.\\
     
$(3)\Rightarrow	 (2) $:   
The proof is similar to the process above, and we omit it here. 
\qed

\begin{remark}\label{rmk:sub Lip}
Recall that 
	 any viscosity subsolution of \eqref{eq:intro_HJs} has to be Lipschitz continuous by \cite[Lemma 2.2]{ishii}, and it is easy to verify that $\varphi(x)$ is Lipschitz if it satisfies (2). As the statement above, it is sufficient to assume that $\varphi(x)$ is continuous when we adapt it into Theorem A.
	
%
%
\end{remark}

\section{Proof of Theorem B and Corollary C}
\noindent{\it Proof of Theorem B:}  
$(1)\Rightarrow (2)$:
	By Remark \ref{rmk:sub Lip}, $\varphi(x)\in C(M)$ is a Lipschitz strict viscosity subsoltion of equation \eqref{eq:intro_HJs} and then for any $x_0 \in M$, there exists an open neighborhood $V_{x_0}$ of $x_0$, and $c_{x_0}<0$ such that $u|{V}_{x_0}$ is a viscosity subsolution of $H(x,d_xu)=c_{x_0} $ on $V_{x_0}$.
	
	 Due to $M$ is compact,  every covering of $M$ contains a finite subcollection covering, then there  exists $c>0$ such that  $\varphi(x)\in C(M)$ is a Lipschitz strict viscosity subsoltion of \eqref{eq:intro_HJs} with $H+c$. 

	Similar with Theorem A, we can choose a sequence of piecewise $C^1$ curves $\gamma_n:[a,b]\to M$, such that $\varphi$ is differentiable on $\gamma_n(t)$ on $ [a,b]$, and $\gamma_n$  converges in the $C^1$ topology to $\gamma $, then we obtain 
	\begin{equation}\label{eq:B.1}
	\begin{split}
	&\,  \varphi(\gamma(b))-\varphi(\gamma(a))= \lim_{n\to +\infty}  \varphi(\gamma_n(b))-\varphi(\gamma_n(a)) =\lim_{n\to +\infty} \int_a^b \langle d_x \varphi(\gamma_n(t)), \dot \gamma_n(t) \rangle  dt \\ 
	\leqslant &\, \lim_{n\to +\infty} \int_a^b L(\gamma_n(t),\dot \gamma_n(t),\varphi(\gamma_n(t)) )+H(\gamma_n(t),\dot \gamma_n(t),\varphi(\gamma_n(t)) )  dt\\
\leqslant &\, \lim_{n\to +\infty}  \int_a^b  L(\gamma_n(t),\dot \gamma_n(t),\varphi(\gamma_n(t)) )-c \  dt \\
=&\, \int_a^b  L(\gamma(t),\dot \gamma(t),\varphi(\gamma(t)) )-c \  dt.
	\end{split}
	\end{equation}	
	 By Theorem A, $\varphi(x)$ is a viscosity subsoltion of equation \eqref{eq:intro_HJs} and $T_t^- \varphi \geqslant \varphi$ for any $t\geqslant 0$.  We claim that 	 
	 \begin{equation}
	     T_t^- \varphi(x) \geqslant \varphi(x)+ c\cdot \frac{1-e^{-\lambda t}}{\lambda } \quad \forall x\in M ,t\in [0,+\infty) .
	 \end{equation}
	 By contradiction, we assume that  there exists  $ x\in M$ and $t>0$ such that $T^-_t \varphi(x)< \varphi(x)+c\cdot \frac{1-e^{-\lambda t}}{\lambda }  $.Due to \eqref{eq:T^-_t}, there exists $\gamma:[0,t]\to M$ be a minimizer of \eqref{eq:T^-_t}  with $\gamma(t)=x $, i.e.
	 \begin{equation}\label{eq:B.2}
	 	 T_t^- \varphi(x)= \varphi(\gamma(0))+\int_{0}^t L(\gamma(\tau),\dot\gamma(\tau ),T_\tau ^-\varphi (\gamma(\tau)))\ d\tau.
	 \end{equation}
	  Let $G(s)=\varphi(\gamma(s))+c\cdot \frac{1-e^{-\lambda s}}{\lambda }  -T_s^- \varphi(\gamma(s)) $,	 since $G(0)=0$ and $G(t)>0$, then there exists $0\leqslant t_0<t$ such that $G(t_0)=0$ and $G(s)>0 $ for any $ s\in (t_0,t] $.
According to \eqref{eq:B.1} and \eqref{eq:B.2}, it shows that
	 \begin{align*}
	 	G(s)=&\,G(s)-G(t_0) \\
	 	=&\, \varphi(\gamma(s))+c\cdot \frac{1-e^{-\lambda s}}{\lambda } -T_s^-\varphi(\gamma(s))-  \varphi(\gamma(t_0))-c\cdot \frac{1-e^{-\lambda t_0}}{\lambda }+ T_{t_0}^-\varphi(\gamma(t_0))  \\
	 	=&\, \varphi(\gamma(s))-\varphi(\gamma(t_0))-\int_{t_0}^s L(\gamma(\tau),\dot\gamma(\tau),T_\tau^-\varphi (\gamma(\tau)))\ d\tau+c\cdot \frac{e^{-\lambda t_0} -e^{-\lambda s}}{\lambda }  \\
	 	\leqslant &\, \int_{t_0}^s \Big[ L(\gamma(\tau),\dot\gamma(\tau),\varphi (\gamma(\tau)))-L(\gamma(\tau),\dot\gamma(\tau),T_\tau^-\varphi (\gamma(\tau)))\Big] \ d \tau  -c(s-t_0) +c\cdot \frac{e^{-\lambda t_0}-e^{-\lambda s}}{\lambda }    \\
	 	\leqslant &\, \lambda \int_{t_0}^s \Big|T_\tau^-\varphi (\gamma(\tau))-\varphi(\gamma(\tau) )  \Big| \ d \tau -c(s-t_0)+c\cdot \frac{e^{-\lambda t_0}-e^{-\lambda s}}{\lambda }  \\
	 	< &\, \lambda \int_{t_0}^s c\cdot \frac{1-e^{-\lambda \tau }}{\lambda }  d \tau -c(s-t_0) +c\cdot \frac{e^{-\lambda t_0}-e^{-\lambda s}}{\lambda }=0 .
	 \end{align*}
	 where $\lambda$ is the Lipschitz constant of $L$ with respect to $u$. It derives a contradiction, which follows that 
	 $$
	 T_t^- \varphi(x) \geqslant \varphi(x)+ c \cdot \frac{1-e^{-\lambda t}}{\lambda }> \varphi(x) \quad \forall x\in M ,t\in (0,+\infty) .
	 $$ 
	Moreover, we have
	 $$
	 \lim_{t\to 0^+} \frac{1}{t}  \Big( T_t^- \varphi (x)- \varphi(x)\Big) \geqslant \lim_{t\to 0^+}  c\cdot \frac{1-e^{-\lambda t }}{\lambda t}=c>0, \quad \forall x\in M.
	 $$

\medskip

$(2)\Rightarrow (1)$: 
Due to \eqref{semigroup and action}, 
\begin{equation}\label{3.1}
	\lim_{t\to 0^+} \frac{1}{t}  \Big( h_{y,\varphi(y)}(x,t)  - \varphi(x)\Big) \geqslant \lim_{t\to 0^+} \frac{1}{t} \Big( T_t^- \varphi (x)- \varphi(x)\Big) \geqslant c>0, \quad \forall x\in M.
\end{equation}
 For any fixed $x\in M$ and any $ C^1 $ curve $\xi:[0,t]\to M$ with $\xi(0)=y$ and $\xi(t)=x$, due to \eqref{eq:Implicit variational}, we have
\begin{align*}
	  h_{y, \varphi(y)}(x,t) 
	=&\,\varphi(y) +\inf_{\substack{\gamma(t)=x \\ \gamma(0)=y } }\int_0^t L(\gamma(\tau), \dot \gamma(\tau) ,h_{y, \varphi(y)}(\gamma(\tau) ,\tau ) ) \ d\tau, \\
	\leqslant &\, \varphi(y) +\int_0^t L(\xi(\tau), \dot \xi(\tau),h_{x, \varphi(x)}(\xi (\tau) ,\tau  )  )\ d\tau.
\end{align*}

For any differentiable point $x\in M$ of $\varphi $,putting  it into \eqref{3.1}, we obtian that
\begin{align*}
	c+\lim_{t \rightarrow 0^+}\frac{1}{t}[\varphi (\xi(t))-\varphi (\xi(0)) ]\leqslant &\, \lim_{t\rightarrow 0^+}\frac{1}{t}\int_0^t L(\xi(s),\dot\xi(s),h_{x,\varphi (x)}(\xi(s),s))  \ ds, 
\end{align*}
which leads to
$$
c+\langle \partial_x\varphi(x),\dot\xi(0)\rangle \leqslant L(\xi(0),\dot\xi(0),h_{x,\varphi (x)}(\xi(0),0)) =L(x,\dot \xi(0),\varphi(x)).
$$

By taking $\dot\xi(0)=\partial_p H(x,\varphi (x),\partial_x \varphi(x))$, we get
\[
L(x,\dot \xi(0),\varphi(x))+H(x,\partial_x \varphi(x),\varphi(x))=\langle \partial_x\varphi(x),\dot\xi(0)\rangle,
\]
which implies
$
H(x, \partial_x \varphi (x) ,\varphi (x))\leqslant -c .$

\medskip
$(2)\Leftrightarrow (3)$: It is similar with $(2)\Leftrightarrow (3)$ of Theorem A.
\qed


\vspace{20pt}

\noindent{\it Proof of Corollary C:} It is a strict version of  Theorem A. On one hand, due to Lemma \ref{lem Phi},  $\Phi_H^t(\Gamma_\varphi )\subset \Gamma_{T_t^-\varphi} $. Thus, by $T_t^- \varphi>\varphi$, we have	 $\Gamma_{T_t^-\varphi} \subset \mathring{\Gamma}_\varphi $. 
On the other hand, for any given $x\in M$ and $t> 0$, by (\ref{semigroup and action}) and (\ref{eq:Minimality}), there exists $p\in T_x M$ such that
 $ (x,p,u)\in \Phi^t_H(\Gamma_\varphi)$ and $ u=T^-_t \varphi(x)$. Since $(x,p,u)\in \Phi^t_H(\Gamma_\varphi)\subset \mathring \Gamma_\varphi$, it follows that
$  T^-_t \varphi(x)=u> \varphi(x). $
Hence, $T_{t}^-\varphi(x)> \varphi(x)$ for any $t> 0$ and $x\in M$.
\qed

\medskip

\appendix
\section{Completeness of the flow}
\begin{lemma}\label{lem:complete flow}
	Suppose that $H$ satisfies (H1)-(H3) and there exists a continuous function $A(h):\R\to \R^+$ such that 
	\begin{equation}\label{eq:sufficient}
	\Big|p\cdot \frac{\partial H}{\partial p} (x,p,u)\Big|\leqslant A\big(H(x,p,u)\big)(1+|u|), \quad \forall (x,p,u)\in T^*M\times \R. 
	\end{equation}
Then condition (SH1) holds, i.e. the contact vector field $\Phi_H^t$ generates a complete flow on $T^*M \times \R$.
\end{lemma}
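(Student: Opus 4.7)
The plan is to run the standard ODE continuation argument: since $M$ is compact the $x$--component of any maximal integral curve stays in $M$, so completeness reduces to showing that neither $u$ nor $p$ can blow up in finite time. I would control them in the order $H \to u \to p$, using (H3) for $H$, the extra assumption \eqref{eq:sufficient} for $u$, and the superlinearity (H2) for $p$.

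The first step is the well-known contact identity for $H$ along its own flow. Differentiating $H(x(t),p(t),u(t))$ along \eqref{eq:ode} and using the three equations, the cross terms $\partial_x H\cdot\partial_p H$ cancel and the remaining $\partial_u H\cdot p\cdot \partial_p H$ terms cancel, leaving
\[
\frac{d}{dt} H\bigl(x(t),p(t),u(t)\bigr) = -\frac{\partial H}{\partial u}\bigl(x(t),p(t),u(t)\bigr)\cdot H\bigl(x(t),p(t),u(t)\bigr).
\]
By (H3) this is a linear ODE with coefficient bounded by $\lambda$, and Gronwall yields $|H(x(t),p(t),u(t))|\leqslant |H(x(0),p(0),u(0))|\,e^{\lambda |t|}$. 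Hence, on any finite interval $[-T,T]$ on which the solution exists, the value of $H$ along the orbit is uniformly bounded by a constant $M_T$.

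Next I would bound $u(t)$. The third equation of \eqref{eq:ode} together with \eqref{eq:sufficient} gives
\[
|\dot u(t)| \leqslant \Big|p\cdot\frac{\partial H}{\partial p}\Big| + |H| \leqslant A\bigl(H(x(t),p(t),u(t))\bigr)\bigl(1+|u(t)|\bigr) + |H(x(t),p(t),u(t))|.
\]
Since $A$ is continuous and $|H|\leqslant M_T$, the quantity $A(H(\cdot))$ is bounded by some $A_T$ on $[-T,T]$, so $|\dot u|\leqslant A_T(1+|u|)+M_T$. Another Gronwall argument then produces a bound $|u(t)|\leqslant U_T$ on $[-T,T]$.

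Finally, I would use superlinearity to bound $p$. Having $x(t)\in M$, $|u(t)|\leqslant U_T$ and $|H(x(t),p(t),u(t))|\leqslant M_T$ on $[-T,T]$, I invoke (H1)--(H2) plus $C^3$ smoothness on the compact set $M\times[-U_T,U_T]$, which makes the superlinearity uniform: for every $K>0$ there exists $R_K$ with $H(x,p,u)\geqslant K|p|$ whenever $|p|\geqslant R_K$ and $(x,u)\in M\times[-U_T,U_T]$. Taking $K=M_T+1$ forces $|p(t)|\leqslant R_K$ on $[-T,T]$. Thus the whole orbit $(x(t),p(t),u(t))$ stays in a compact subset of $T^*M\times\R$ on every finite interval, and the standard escape-time criterion rules out finite-time blow-up, both forward and backward; consequently every maximal integral curve is defined on all of $\R$, which is precisely (SH1). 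The only nontrivial step is the $u$-estimate, where the growth hypothesis \eqref{eq:sufficient} is used in an essential way; the other two steps are essentially forced by the structure of contact Hamiltonian dynamics and by uniform superlinearity on compact sets.
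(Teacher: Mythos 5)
Your proof is correct and follows essentially the same route as the paper: the contact identity $\frac{d}{dt}H=-\frac{\partial H}{\partial u}\cdot H$ with (H3) to bound $H$, then \eqref{eq:sufficient} with Gronwall to bound $u$, and finally superlinearity (H2) to confine $p$ and invoke the escape-time criterion. Your treatment of the uniformity of (H2) on the compact set $M\times[-U_T,U_T]$ is slightly more explicit than the paper's, but the argument is the same.
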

\noindent{\it Proof of Lemma \ref{lem:complete flow}:}
	Denote that $H(t):=H(\Phi_H^t(x,p,u))=H(x(t),p(t),u(t))$ for any $t\in \R$. By \eqref{eq:ode}, one can compute that
	\begin{equation*}
	\begin{aligned}
	&\frac{d}{dt} H(\Phi_H^t(x,p,u)) = \Big[ \frac{\partial H}{\partial x} \cdot \dot{x} +\frac{\partial H}{\partial p} \cdot \dot{p}+\frac{\partial H}{\partial u} \cdot \dot{u} \Big](x(t),p(t),u(t))  \\
	=&\, \Big[\frac{\partial H}{\partial x} \cdot  \frac{\partial H}{\partial p} - \frac{\partial H}{\partial p} \cdot (\frac{\partial H}{\partial x} +\frac{\partial H}{\partial u} \cdot p ) + \frac{\partial H}{\partial u} \cdot (\frac{\partial H}{\partial p} \cdot \dot{p}-H ) \Big](x(t),p(t),u(t))   \\
	=&\,-\frac{\partial H}{\partial u}(x(t),p(t),u(t))  \cdot H(x(t),p(t),u(t)) .
	\end{aligned}
	\end{equation*}
	which together with (H3), implies that $|H(x(t),p(t),u(t))| \leqslant e^{\lambda |t|}  |H(x(0),p(0),u(0))| $ . \\
	From (SH1), 	$|p\cdot \frac{\partial H}{\partial p} |\leqslant A(H)(1+|u|)$, then by \eqref{eq:ode} it obtain 
	\begin{align*}
	|\dot u|= \Big|p\cdot \frac{\partial H}{\partial p} -H\Big|\leqslant A(H)(1+|u|)+|H| 
	\leqslant  A(h_0)|u|+ A(h_0)+ e^{\lambda |t|}|H(0)|. 
	\end{align*}
	where $A(h_0)=\sup_{|h|\leqslant e^{\lambda |t|}H(0)}A(h)$. It shows that $|u(s)|$ is bounded  on $[-t,t]$  for any $t \geqslant 0$.
	If there does not exist a flow $\Phi_H^t(x,p,u)$ for $t\in (-\infty,+\infty)$, it means that the solution $(x(t),p(t),u(t))$ can not be extended further for some finite $t_0$. It implies that $|p(t_0)|$ would blow up to infinite for some finite $t_0$, which leads to the boundless of $|H(t_0)|$ because of (H2). It contradicts that  $|H(t)|$ is bounded by finite $t$. Therefore, the contact vector field $\Phi_H^t$ generates a complete flow.

\vskip 1cm

\section*{Acknowledgements} 

Jun Yan is supported by NSFC Grant No.  11631006, 11790273.All the authors are grateful to  Prof. Lin Wang for helpful suggestions.

\medskip

\bibliographystyle{abbrv}
\bibliography{sub.bib}

\end{document}